\newtheorem{theorem}{Theorem}[section]
\newtheorem{lemma}[theorem]{Lemma}
\newtheorem{corollary}[theorem]{Corollary}
\newtheorem{proposition}[theorem]{Proposition}
\theoremstyle{remark}
\newtheorem{definition}[theorem]{Definition}
\newtheorem{remark}[theorem]{Remark}
\newtheorem{example}[theorem]{Example}
\newtheorem{conjecture}[theorem]{Conjecture}
\numberwithin{equation}{section}
\thanks{
The author was supported in part by JSPS Grant-in-Aid for 
Scientific Research (B) 22340025.
}
\keywords{hyperbolic metric, uniformly perfect, spherical metric}
\subjclass[2010]{Primary 30F45; Secondary 30C80, 51M10}
\dedicatory{Dedicated to Professor Matti Vuorinen \\
on the occasion of his sixty-fifth birthday}
\newcommand{\C}{{\mathbb C}}
\newcommand{\D}{{\mathbb D}}
\newcommand{\Cd}{{\widetilde C}}
\newcommand{\Cdm}{{\widetilde C'}}
\newcommand{\Ce}{{\widehat C}}
\newcommand{\Cem}{{\widehat C'}}
\newcommand{\sphere}{{\widehat{\mathbb C}}}
\newcommand{\isom}{{\operatorname{Isom}}}
\newcommand{\mob}{{\text{\rm M\"{o}b}}}
\newcommand{\inv}{^{-1}}
\newcommand{\PSL}{{\operatorname{PSL}}}
\newcommand{\SL}{{\operatorname{SL}}}
\newcommand{\aand}{{\quad\text{and}\quad}}
\begin{document}
\bibliographystyle{amsplain}

\title[Hyperbolic metric and uniform perfectness] % running head
{\vspace*{.1cm}
Spherical density of hyperbolic metric \\
and uniform perfectness
}

%\date{\today}

\author[T. Sugawa]{\noindent Toshiyuki Sugawa}
\email{sugawa@math.is.tohoku.ac.jp}
\address{\newline Graduate School of Information Sciences
\newline Tohoku University
\newline Sendai 980-8579
\newline Japan
}

%% add similarly of more authors are there

\begin{abstract} 
It is well known that a hyperbolic domain in the complex plane has
uniformly perfect boundary precisely when the product of its hyperbolic
density and the distance function to its boundary has a positive lower bound.
We extend this characterization to a hyperbolic domain in the Riemann sphere
in terms of the spherical metric.
\end{abstract}

\maketitle

\section{Introduction and main result}
Let $\Omega$ be a domain in the Riemann sphere $\sphere=\C\cup\{\infty\}$
with at least three points in its boundary $\partial\Omega\subset\sphere.$
Then, it is well known that $\Omega$ carries the {\it hyperbolic} metric
$\lambda_\Omega=\lambda_\Omega(z)|dz|,$ which is a complete conformal
metric of constant Gaussian curvature $-4.$
Such a domain is thus called hyperbolic.
For instance, the unit disk $\D=\{z\in\C: |z|<1\}$ has the hyperbolic
metric of the form
$$
\lambda_\D(z)=\frac1{1-|z|^2}.
$$
In what follows, we consider only hyperbolic domains unless otherwise stated.
The hyperbolic metric $\lambda_\Omega$
can be characterized by the relation
$$
\lambda_\D(z)
%=p^*\lambda_\Omega(z)
=\lambda_\Omega(p(z))|p'(z)|,\quad
z\in\Omega,
$$
where $p:\D\to\Omega$ is an analytic universal coverning projection.

As general references for the hyperbolic metric and related topics, 
the reader may consult \cite{KL:hg}, \cite{AW:sp}, and \cite{BM05}.
We remark that the hyperbolic metric often refers to $2\lambda_\Omega,$
which is of constant curvature $-1.$
The reader should check its definition first
when refering to other papers or books on the hyperbolic metric.

We denote by $d_\Omega(z)$ the Euclidean distance from $z\in\Omega$
to the boundary $\partial\Omega;$ namely,
$$
d_\Omega(z)=\min_{a\in\partial\Omega}|z-a|.
$$
As is easily seen, the inequality $d_\Omega(z)\lambda_\Omega(z)\le1$
holds for each $z\in\Omega\setminus\{\infty\}.$
Moreover, if $\Omega$ is simply connected and if $\Omega\subset\C,$
the Koebe one-quarter theorem implies the opposite inequality
$d_\Omega(z)\lambda_\Omega(z)\ge1/4.$
In general, however, $d_\Omega(z)\lambda_\Omega(z)$ can be arbitrarily small.
Indeed, positivity of the quantity
$$
C(\Omega)=\inf_{z\in\Omega}d_\Omega(z)\lambda_\Omega(z)
$$
gives the domain $\Omega$ a strong geometric constraint.

\begin{theorem}[Beardon and Pommerenke \cite{BP78}]
Let $\Omega$ be a hyperbolic domain in $\C.$
Then $C(\Omega)>0$ if and only if $\partial\Omega$ is uniformly perfect.
\end{theorem}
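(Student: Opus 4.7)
The plan is to prove both implications by comparing $\lambda_\Omega$ with the hyperbolic metric of suitable auxiliary domains, exploiting the monotonicity principle (inclusion $\Omega_1\subset\Omega_2$ yields $\lambda_{\Omega_2}\le\lambda_{\Omega_1}$ on $\Omega_1$) and conformal invariance. The reformulation of uniform perfectness that drives the proof is the bounded-modulus characterization: $\partial\Omega$ is uniformly perfect precisely when there is a constant $M>0$ such that every round annulus $A\subset\Omega$ that separates $\partial\Omega$ has modulus at most $M$.

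Necessity ($C(\Omega)>0\Rightarrow$ UP) is handled by contraposition. Assuming the modulus bound fails, I select annuli $A_n=\{w\in\C:r_n<|w-a_n|<R_n\}\subset\Omega$ that separate $\partial\Omega$ with $R_n/r_n\to\infty$, and pick $z_n$ on the geometric-mean circle $|z_n-a_n|=\sqrt{r_nR_n}$. The explicit formula for the hyperbolic metric of a round annulus gives
\[
\lambda_{A_n}(z_n)=\frac{\pi}{2\sqrt{r_nR_n}\,\log(R_n/r_n)},
\]
and the separation property forces some boundary point to lie in $\overline{B(a_n,r_n)}$, so that $d_\Omega(z_n)\le 2\sqrt{r_nR_n}$. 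Monotonicity then yields $\lambda_\Omega(z_n)\,d_\Omega(z_n)\le\pi/\log(R_n/r_n)\to 0$, hence $C(\Omega)=0$.

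For sufficiency, assume $\partial\Omega$ is uniformly perfect with constant $\kappa\in(0,1)$ and fix $z_0\in\Omega$. Let $a\in\partial\Omega$ be a nearest boundary point and set $d=d_\Omega(z_0)=|z_0-a|$. In the principal case $d<\diam\partial\Omega$, the UP condition applied at scale $d$ supplies a second point $b\in\partial\Omega$ with $\kappa d\le|b-a|\le d$, while $|z_0-b|\ge d$ since $a$ realizes the nearest distance. The affine map $\psi(w)=(w-a)/(b-a)$ embeds $\Omega\subset\C\setminus\{a,b\}$ into $\C\setminus\{0,1\}$ and sends $z_0$ into the compact set
\[
K_\kappa=\{w\in\C:1\le|w|\le 1/\kappa,\ |w-1|\ge 1\},
\]
on which the positive continuous function $\lambda_{\C\setminus\{0,1\}}$ attains a positive minimum $m_\kappa$. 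Combining monotonicity, conformal invariance, and the inequality $d/|b-a|\ge 1$ gives
\[
\lambda_\Omega(z_0)\,d\;\ge\;\frac{\lambda_{\C\setminus\{0,1\}}(\psi(z_0))}{|b-a|}\cdot d\;\ge\;m_\kappa>0.
\]

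The principal obstacle is the complementary edge case $d\ge\diam\partial\Omega$, which can occur only when $\Omega$ is unbounded so that $\infty\in\partial\Omega$ in $\sphere$. In that regime the UP condition at scale $d$ produces no additional boundary point, and any $b\in\partial\Omega$ is forced to satisfy $|b-a|\ll d$, so $\psi(z_0)$ escapes every compact subset of $\C\setminus\{0,1\}$ and the preceding estimate collapses. I would handle this by conjugating first with a M\"obius transformation $T$ of $\sphere$ that moves a convenient boundary point near $z_0$ (for instance sending $\infty$ to a point close to $z_0$, swapping the roles of ``near'' and ``far'' boundary points) and then applying the principal-case argument to $T(\Omega)$. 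The technical heart of the argument is then the careful accounting of how Euclidean distance and the uniform-perfectness constant transform under $T$, since neither is strictly M\"obius invariant.
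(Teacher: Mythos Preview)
The paper does not supply a proof of this statement; it is quoted as a known result of Beardon and Pommerenke \cite{BP78} and used only as background for the spherical analogue that the paper actually establishes. There is therefore no proof in the paper to compare your attempt against.

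On its own merits, your argument follows the standard Beardon--Pommerenke route and is correct, except that the ``principal obstacle'' you flag is illusory under the paper's definition of uniform perfectness. That definition requires the annulus condition for all radii $0<r<d(\partial\Omega)$, with the explicit convention that $d(\partial\Omega)=+\infty$ whenever $\infty\in\partial\Omega$. If $\Omega\subset\C$ is unbounded then $\infty\in\partial\Omega$, hence $d(\partial\Omega)=+\infty$ and the inequality $d_\Omega(z_0)<d(\partial\Omega)$ is automatic; the UP hypothesis at scale $r=d_\Omega(z_0)$ therefore already produces the second boundary point $b$. If $\Omega$ is bounded, the ray from $z_0$ in the direction $z_0-a$ must leave $\Omega$ at some $b'\in\partial\Omega$ with $|b'-z_0|\ge d_\Omega(z_0)$, and since $a,z_0,b'$ are collinear with $z_0$ between $a$ and $b'$ one gets $d(\partial\Omega)\ge|b'-a|\ge 2\,d_\Omega(z_0)$. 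Either way your ``principal case'' always applies, the compactness argument in $\C\setminus\{0,1\}$ goes through unchanged, and the M\"obius conjugation you propose for the edge case is unnecessary.
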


Here, a compact subset $E$ of $\sphere$ containing at least two points is
said to be {\it uniformly perfect} if there exists a constant $k\in(0,1)$
such that $\{z\in E: kr<|z-a|<r\}\ne\emptyset$ for every 
$a\in E\setminus\{\infty\}$ and $0<r<d(E),$
where $d(E)$ denotes the Euclidean diameter of $E.$
Note that $d(E)=+\infty$ whenever $\infty\in E.$
There are many other characterizations of uniformly perfect sets.
See \cite{Pom79}, \cite{Pom84}, \cite{SugawaUP} and \cite{SugawaSEUP}
in addition to \cite{KL:hg} and \cite{AW:sp}.

In the above theorem, the assumption $\Omega\subset\C$ is essential.
Indeed, let us consider the domain $\Delta_R=\{z\in\sphere: |z|>R\}$
containing $\infty.$
Then, the hyperbolic metric of it is expressed by
$$
\lambda_{\Delta_R}(z)=\frac{R}{|z|^2-R^2}.
$$
Thus,
$$
d_{\Delta_R}(z)\lambda_{\Delta_R}(z)=\frac{R}{|z|+R}\to0 \quad (z\to\infty).
$$
This phenomenon may be explained by the fact that
$\Delta_R$ and $\Delta_R\setminus\{\infty\}$ cannot be distinguished 
merely by the distance function $d_\Omega(z).$

It is therefore desirable to have a similar characterization of
the uniform perfectness which is valid  for domains in $\sphere.$
To this end, it is natural to employ the spherical distance instead of
the Euclidean one.

We recall that the spherical (chordal) distance is defined by
$$
\sigma(z,w)=\frac{|z-w|}{\sqrt{(1+|z|^2)(1+|w|^2)}}
$$
for $z, w\in\C$ and $\sigma(z,\infty)=1/\sqrt{1+|z|^2}$ for $z\in\C.$
Note that $0\le\sigma(z,w)\le 1.$
The corresponding infinitesimal form is given by
$$
\sigma(z)|dz|=\frac{|dz|}{1+|z|^2}
$$
which is known as the spherical metric and has constant Gauassian curvature
$+4.$
It is also convenient to use the quantity
$$
\tau(z,w)=\left|\frac{z-w}{1+z\bar w}\right|,
$$
which can also be thought of as a spherical counterpart of the Euclidean
distance, although $\tau$ is not a distance function on $\sphere.$
We then consider the distances to the boundary
$$
\delta_\Omega(z)=\min_{a\in\partial\Omega}\sigma(z,a)
\aand
\varepsilon_\Omega(z)=\min_{a\in\partial\Omega}\tau(z,a)
$$
for $z\in\Omega.$

In the context of spherical geometry, it is more natural to
consider the {\it spherical density} of the hyperbolic metric defined by
$$
\mu_\Omega(z)=\frac{\lambda_\Omega(z)|dz|}{\sigma(z)|dz|}
=(1+|z|^2)\lambda_\Omega(z).
$$
Minda \cite{Minda85a} studied $\mu_\Omega(z)$ in relation with 
$\varepsilon_\Omega(z)$ and gave several estimates for $\mu_\Omega(z).$
Among others, the following result is relevant to the present paper.

\begin{theorem}[Minda \cite{Minda85a}]
Let $\Omega$ be a hyperbolic domain in $\sphere.$
For each $z\in\Omega,$ the inequality $\varepsilon_\Omega(z)\mu_\Omega(z)
\le 1.$ Moreover, equality holds at $z$ if and only if $\Omega$ is a spherical
disk with center $z.$
\end{theorem}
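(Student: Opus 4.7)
The plan is to reduce to the case $z=0$ by a spherical rotation and then invoke the classical planar estimate $d_\Omega(z)\lambda_\Omega(z)\le 1.$ After possibly conjugating by $w\mapsto 1/w$ (a spherical isometry), we may assume $z\ne\infty.$ Consider then the Möbius transformation
\[
M_z(w)=\frac{w-z}{1+\bar z w}.
\]
Since $M_z$ swaps the antipodal pair $\{z,-1/\bar z\}$ with $\{0,\infty\},$ it commutes with the antipodal map of $\sphere$ and is therefore a rotation of the Riemann sphere, in particular a spherical isometry. Set $\Omega'=M_z(\Omega),$ which is again hyperbolic and contains $0.$

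The Möbius invariance of the spherical density $\sigma$ and the conformal invariance of the hyperbolic metric give
\[
\mu_\Omega(z)=\mu_{\Omega'}(0)=\lambda_{\Omega'}(0),
\]
while a direct computation shows $|M_z(w)|=\tau(z,w),$ hence
\[
\varepsilon_\Omega(z)=\min_{b\in\partial\Omega'}|b|=d_{\Omega'}(0).
\]
Since the Euclidean disk $B=\{w\in\C:|w|<d_{\Omega'}(0)\}$ lies in $\Omega',$ the monotonicity of the hyperbolic metric under inclusion yields $\lambda_{\Omega'}(0)\le\lambda_B(0)=1/d_{\Omega'}(0),$ and multiplying through proves the desired inequality. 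For the equality case, $\varepsilon_\Omega(z)\mu_\Omega(z)=1$ forces $\lambda_{\Omega'}(0)=\lambda_B(0);$ by the strict form of monotonicity (equality at a single point forces the inclusion $B\hookrightarrow\Omega'$ to be a local isometry, hence a covering map, hence the identity since $B$ is simply connected), this happens iff $\Omega'=B.$ A Euclidean disk centered at $0$ is also a spherical disk centered at $0,$ and $M_z^{-1}$ is a spherical isometry sending $0$ to $z,$ so $\Omega$ is a spherical disk of center $z.$ The converse is immediate by retracing the computation in $B.$

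The only non-routine work is concentrated in the reduction step: verifying the equivariance identities for $\tau,$ $\sigma,$ and $\lambda$ under the rotation $M_z,$ and handling $z=\infty$ uniformly. Once these transformation rules are in place, the main assertion is a one-line consequence of the planar estimate, and the equality case is the standard strict monotonicity principle for the hyperbolic metric.
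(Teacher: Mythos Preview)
Your argument is correct. Note, however, that the paper does not supply its own proof of this statement: it is quoted as a result of Minda \cite{Minda85a} and used as background, so there is nothing to compare against directly. That said, your reduction is precisely the one the paper sets up in Section~2, where it records that each $T\in\isom^+(\sphere)$ has the form $T(w)=e^{i\theta}(w-a)/(1+\bar a w)$, that $\tau(z,a)=|T(z)|$, and that $\varepsilon_\Omega\mu_\Omega$ is invariant under such $T$. Your proof simply assembles these ingredients: after rotating $z$ to $0$ one has $\varepsilon_{\Omega'}(0)=d_{\Omega'}(0)$ and $\mu_{\Omega'}(0)=\lambda_{\Omega'}(0)$, so the inequality becomes the planar bound $d_{\Omega'}(0)\lambda_{\Omega'}(0)\le 1$, and the equality case is the standard strict monotonicity $\lambda_{\Omega'}<\lambda_B$ unless $\Omega'=B$. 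This is almost certainly Minda's original argument as well.

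One small remark on the equality discussion: the simple connectivity of $B$ is not really what is doing the work. The inclusion $B\hookrightarrow\Omega'$ is already injective, so once it is a covering (equivalently, a local hyperbolic isometry) it is automatically a bijection onto $\Omega'$, giving $\Omega'=B$. Your conclusion is correct either way.
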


We define spherical counterparts to $C(\Omega)$ in the following way:
$$
\Cd(\Omega)=\inf_{z\in\Omega}\delta_\Omega(z)\mu_\Omega(z)
\aand
\Ce(\Omega)=\inf_{z\in\Omega}\varepsilon_\Omega(z)\mu_\Omega(z).
$$

\begin{example}\label{ex:1}
We consider the disk $\D_R=\{z\in\C: |z|<R\}$ for $0<R<+\infty.$
It is immediate to see that $C(\D_R)=1/2.$
On the other hand, we compute
$\mu_{\D_R}(z)=R(1+|z|^2)/(R^2-|z|^2),~\varepsilon_{\D_R}(z)=\tau(|z|,R)
=(R-|z|)/(1+R|z|)$ and $\delta_{\D_R}(z)
=\sigma(|z|,R)=(R-|z|)/\sqrt{(1+R^2)(1+|z|^2)}.$
Therefore,
$$
\Cd(\D_R)
=\inf_{0<x<R}\frac{R-x}{\sqrt{(1+R^2)(1+x^2)}}\cdot\frac{R(1+x^2)}{R^2-x^2}
=\inf_{0<x<R}\frac{R\sqrt{1+x^2}}{(R+x)\sqrt{1+R^2}}.
$$
Since the function $\sqrt{1+x^2}/(R+x)$ is decreasing in $0<x<1/R$
and increasing in $1/R<x,$ we obtain
$$
\Cd(\D_R)=\begin{cases}
1/2 &\quad\text{if}~R\le 1, \\
R/(1+R^2)<1/2 &\quad\text{if}~R>1.
\end{cases}
$$
We also have
$$
\varepsilon_{\D_R}(x)\mu_{\D_R}(x)
=\frac{R-x}{1+Rx}\cdot\frac{R(1+x^2)}{R^2-x^2}
=\frac{R(1+x^2)}{(1+Rx)(R+x)}
$$
for $0<x<R.$
Since the function $R(1+x^2)/(1+Rx)(R+x)$ is decreasing in $0<x<1,$
increasing in $x>1,$ and tends to $1/2$ as $x\to R,$ we obtain finally
$$
\Ce(\D_R)=\begin{cases}
1/2 &\quad\text{if}~R\le 1, \\
2R/(1+R)^2<1/2 &\quad\text{if}~R>1.
\end{cases}
$$
The spherical diameter, namely, 
the diameter with respect to the distance $\sigma,$
of a set $E\subset\sphere$ will be denoted by $\sigma(E).$
Then we observe that
$$
\sigma(\sphere\setminus\D_R)=
\begin{cases}
1 &\quad\text{if}~R\le 1, \\
\sigma(R,-R)=2R/(1+R^2) &\quad\text{if}~R>1.
\end{cases}
$$
Therefore,
$$
\frac{\Cd(\D_R)}{\sigma(\sphere\setminus\D_R)}=\frac12
\aand
\frac12\le\frac{\Ce(\D_R)}{\sigma(\sphere\setminus\D_R)}<1
$$
for any $R>0.$
Note also that the diameter of $\sphere\setminus\D_R$ with
respect to $\tau$
is $+\infty$ for $R\le 1$ and $2R/(R^2-1)$ for $R>1.$
\end{example}

In view of the above example, we expect more uniform estimates
if we consider the modified quantities
$$
\Cdm(\Omega)=\frac{\Cd(\Omega)}{\sigma(\sphere\setminus\Omega)}
\aand
\Cem(\Omega)=\frac{\Ce(\Omega)}{\sigma(\sphere\setminus\Omega)}.
$$

Since $\delta_\Omega, \varepsilon_\Omega, \mu_\Omega, 
\sigma(\sphere\setminus\Omega)$ are invariant
under the spherical isometries (see \cite{Minda85a}),
so are the quantities $\Cd(\Omega), \Cdm(\Omega), \Ce(\Omega)$ and
$\Cem(\Omega);$ namely, $\Cd(T(\Omega))=\Cd(\Omega),
\Cdm(T(\Omega))=\Cdm(\Omega), \Ce(T(\Omega))=\Ce(\Omega)$
and $\Cem(T(\Omega))=\Cem(\Omega)$ for a spherical isometry $T.$

Our main result is now stated as in the following.

\begin{theorem}[Main Theorem]
Let $\Omega$ be a hyperbolic domain in $\C.$
Then,
\begin{enumerate}
\item[(i)]
$\Ce(\Omega)\le1/2.$
\item[(ii)]
$\Cd(\Omega)\le \Ce(\Omega)$ and $\Cdm(\Omega)\le \Cem(\Omega).$
\item[(iii)]
$\Ce(\Omega)\le 2C(\Omega).$
\item[(iv)]
$C(\Omega)\le 4\Cdm(\Omega)=4\Cd(\Omega)/\sigma(\sphere\setminus\Omega).$
\end{enumerate}
\end{theorem}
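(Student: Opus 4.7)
Part (ii) reduces to a pointwise inequality. A direct expansion gives the identity $|1+z\bar a|^2+|z-a|^2 = (1+|z|^2)(1+|a|^2)$, from which one reads off $\sigma(z,a)\le\tau(z,a)$ (equivalently, $\sigma=\tau/\sqrt{1+\tau^2}$). Taking the minimum over $a\in\partial\Omega$ and multiplying by $\mu_\Omega(z)$ gives $\delta_\Omega(z)\mu_\Omega(z)\le\varepsilon_\Omega(z)\mu_\Omega(z)$ pointwise; passing to infima yields $\Cd(\Omega)\le\Ce(\Omega)$, and dividing both by the positive quantity $\sigma(\sphere\setminus\Omega)$ gives $\Cdm(\Omega)\le\Cem(\Omega)$.

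For (iii), I would argue along an infimum sequence rather than pointwise, since the distortion ratio $(1+|z|^2)/|1+z\bar a|$ can be arbitrarily large in general. Given $z_n\in\Omega$ with $d_\Omega(z_n)\lambda_\Omega(z_n)\to C(\Omega)$, pick $a_n\in\partial\Omega\cap\C$ Euclidean-nearest to $z_n$, so
\begin{equation*}
\varepsilon_\Omega(z_n)\mu_\Omega(z_n)\le \tau(z_n,a_n)\mu_\Omega(z_n) =\frac{1+|z_n|^2}{|1+z_n\bar a_n|}\,d_\Omega(z_n)\lambda_\Omega(z_n).
\end{equation*}
The hard part will be showing that the distortion factor is asymptotically at most $2$ along this sequence: either $z_n$ stays bounded, in which case $a_n$ remains close to $z_n$ and the factor tends to $1$, or $z_n\to\infty$, in which case $\infty\in\partial\Omega$ is available as an alternative test boundary point that makes $\varepsilon_\Omega(z_n)\mu_\Omega(z_n)\to 0$ outright.

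For (iv), the equivalent statement is $\Cd(\Omega)\ge C(\Omega)\sigma(\sphere\setminus\Omega)/4$. My plan is to pick two boundary points $a,b\in\partial\Omega$ with $\sigma(a,b)\ge\sigma(\sphere\setminus\Omega)/2$ (which exist by definition of the spherical diameter), apply a spherical isometry to normalize their configuration, and then combine the Beardon--Pommerenke bound $d_\Omega(z)\lambda_\Omega(z)\ge C(\Omega)$ with the identities relating $\sigma$ and $\tau$ from part (ii) to produce the constant $4$. The main obstacle will be tracking the Euclidean-versus-spherical geometric interplay efficiently, in particular keeping the factor $(1+|z|^2)$ in $\mu_\Omega$ in check relative to $|1+z\bar a|$ and $|1+z\bar b|$.

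Part (i) is the sharpest claim, and my plan is to approach a finite boundary point. For $a\in\partial\Omega\cap\C$ (which exists since $\partial\Omega$ has at least three points in $\sphere$) and $z\to a$ in $\Omega$, the distortion factor $(1+|z|^2)/|1+z\bar a|$ tends to $1$, so $\varepsilon_\Omega(z)\mu_\Omega(z)\le\tau(z,a)\mu_\Omega(z)$ is asymptotic to $|z-a|\lambda_\Omega(z)$. It therefore suffices to show $\liminf_{z\to a}|z-a|\lambda_\Omega(z)\le 1/2$ for a suitable $a$; at a ``regular'' boundary point (one admitting an outer tangent disk in the complement) this limit equals $1/2$ exactly, matching the sharp value in Example~\ref{ex:1} for $\D_R$ with $R\le 1$. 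The main obstacle will be producing such an approach direction for an arbitrary hyperbolic $\Omega$ with possibly exotic boundary; here I plan to compare with the spherical disks $\{w:\tau(z,w)<\varepsilon_\Omega(z)\}\subset\Omega$, where Minda's equality $\varepsilon\mu=1$ holds, and pass to the limit as these disks shrink to the boundary.
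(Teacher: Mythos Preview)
Your treatment of (ii) is correct and matches the paper exactly.

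For (i), your plan is essentially the paper's argument: one fixes $z_0\in\Omega$, normalizes by a spherical isometry so that $z_0=0$ and the $\tau$-nearest boundary point is $a=R>0$, and then compares with $\D_R\subset\Omega$ via domain monotonicity $\mu_\Omega\le\mu_{\D_R}$. Since $R\in\partial\Omega$, one has $\varepsilon_\Omega(x)=\varepsilon_{\D_R}(x)$ for $0<x<R$, and letting $x\to R^-$ gives $\Ce(\Omega)\le\lim_{x\to R^-}\varepsilon_{\D_R}(x)\mu_{\D_R}(x)=1/2$ by the computation in Example~\ref{ex:1}. Your description via ``shrinking spherical disks'' is this same comparison.

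For (iii), your sequence argument has a genuine gap, and in fact the detour is unnecessary. The distortion factor you wrote down, $(1+|z|^2)/|1+z\bar a|$ with $a$ the \emph{Euclidean}-nearest point, is indeed unbounded, and your case analysis does not control it: if $z_n$ stays bounded there is no reason $a_n$ must approach $z_n$ (the infimum $C(\Omega)$ can be realized in the interior), and if $z_n\to\infty$ the bound $\varepsilon_\Omega(z_n)\mu_\Omega(z_n)\le \tau(z_n,\infty)\mu_\Omega(z_n)\sim|z_n|\lambda_\Omega(z_n)$ need not tend to $0$. The paper instead proves the \emph{pointwise} inequality $\varepsilon_\Omega(z)\mu_\Omega(z)\le 2\,d_\Omega(z)\lambda_\Omega(z)$ via Lemma~\ref{lem:1}: the spherical disk $\{w:\tau(w,z)<\varepsilon_\Omega(z)\}$ lies in $\Omega\subset\C$, hence cannot contain $\infty$, which forces $\varepsilon_\Omega(z)|z|\le 1$; the same lemma then gives $d_\Omega(z)\ge\varepsilon_\Omega(z)(1+|z|^2)/(1+\varepsilon_\Omega(z)|z|)\ge\varepsilon_\Omega(z)(1+|z|^2)/2$, and multiplying by $\lambda_\Omega(z)=\mu_\Omega(z)/(1+|z|^2)$ finishes it. The missing idea is that the constraint $\Omega\subset\C$ bounds $\varepsilon_\Omega(z)|z|$, not the ratio you were tracking.

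For (iv), your outline is in the right direction but is missing two ingredients that the paper supplies. First, the point $a$ is not chosen globally but as the spherically nearest boundary point to the given $z$ (so that $\delta_\Omega(z)=\sigma(z,a)$), and then $b\in\sphere\setminus\Omega$ is taken farthest from $a$; this is what makes $\sigma(a,b)\ge\sigma(\sphere\setminus\Omega)/2$ and simultaneously gives $\delta_{\Omega'}(z')=|z'-a'|/\sqrt{(1+|z'|^2)(1+|a'|^2)}$ with $|z'-a'|\ge d_{\Omega'}(z')$ after sending $b\mapsto\infty$. Second, and more importantly, after the isometry $T$ you are in the domain $\Omega'=T(\Omega)\subset\C$, and the Beardon--Pommerenke input you want is $d_{\Omega}\lambda_{\Omega}\ge C(\Omega)$ for the \emph{original} domain, whereas what appears in the estimate is $d_{\Omega'}(z')\lambda_{\Omega'}(z')$. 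Bridging these costs another factor of $2$: the paper proves a small lemma (Corollary~\ref{cor:2}) that a conformal map which sends disks to convex domains (in particular any M\"obius map with image in $\C$) satisfies $d_\Omega(z)\lambda_\Omega(z)\le 2\,d_{f(\Omega)}(f(z))\lambda_{f(\Omega)}(f(z))$. That lemma is the step your plan does not anticipate, and it is where the second factor of $2$ in the constant $4$ comes from.
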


As an immediate corollary of the main theorem, we obtain the following
characterizations of uniform perfectness of the boundary.

\begin{corollary}
Let $\Omega$ be a hyperbolic domain in $\sphere.$
Then the following conditions are equivalent:
\begin{enumerate}
\item[(1)]
$\partial\Omega$ is uniformly perfect.
\item[(2)]
$\Cd(\Omega)>0.$
\item[(3)]
$\Ce(\Omega)>0.$
\end{enumerate}
\end{corollary}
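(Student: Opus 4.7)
The plan is to combine parts (ii)--(iv) of the Main Theorem with the Beardon--Pommerenke theorem, after reducing to the planar case $\Omega\subset\C$. First I would handle the reduction: if $\infty\in\Omega$, I would pick a spherical isometry $T$ (a rotation of $\sphere$ realized as a M\"obius transformation) sending $\infty$ into $\sphere\setminus\Omega$, so that $T(\Omega)\subset\C$. By the spherical isometry invariance noted immediately before the Main Theorem, both $\Cd$ and $\Ce$ are unchanged under $T$; uniform perfectness of $\partial\Omega$ likewise transfers to $\partial T(\Omega)=T(\partial\Omega)$, since uniform perfectness of a compact subset of $\sphere$ is M\"obius invariant (a standard fact, visible for instance from the characterization in terms of moduli of separating annuli). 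Hence we may assume $\Omega\subset\C$.

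For such $\Omega$, the Beardon--Pommerenke theorem supplies the equivalence $(1)\Leftrightarrow C(\Omega)>0$. Parts (ii) and (iii) give $\Cd(\Omega)\le\Ce(\Omega)\le 2C(\Omega)$, so each of (2) and (3) implies (1). Conversely, part (iv) gives
$$
C(\Omega)\ \le\ \frac{4\,\Cd(\Omega)}{\sigma(\sphere\setminus\Omega)},
$$
and since $\partial\Omega$ contains at least three points, $\sigma(\sphere\setminus\Omega)>0$; therefore $C(\Omega)>0$ forces $\Cd(\Omega)>0$, which in turn forces $\Ce(\Omega)>0$ via $\Cd\le\Ce$. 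The three implications close the circle $(1)\Leftrightarrow(2)\Leftrightarrow(3)$.

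I do not expect any serious obstacle once the Main Theorem is in hand; the argument is essentially a bookkeeping exercise. The only non-routine point is the reduction step, which relies on the M\"obius invariance of uniform perfectness. If this is to be regarded as non-obvious in the present context, a one-line justification using a manifestly invariant characterization (for instance via the hyperbolic metric of $\sphere\setminus\partial\Omega$, or via annular moduli as cited in \cite{Pom79,Pom84,SugawaUP}) would suffice.
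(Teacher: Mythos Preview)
Your argument is correct and matches the paper's intent: the paper states the result as ``an immediate corollary of the main theorem'' without further proof, and your derivation---reducing to $\Omega\subset\C$ via a spherical isometry, then applying Beardon--Pommerenke together with parts (ii)--(iv)---is exactly the computation this phrase suppresses. The one phrasing to tidy is the reduction step: you want a spherical isometry $T$ with $T^{-1}(\infty)\in\sphere\setminus\Omega$ (equivalently, $T(b)=\infty$ for some $b\notin\Omega$), not one ``sending $\infty$ into $\sphere\setminus\Omega$''; this is cosmetic and the rest goes through as written.
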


Harmelin and Minda \cite{HM92} showed that $C(\Omega)\le 1/2$
for a hyperbolic domain $\Omega\subset\C.$
The above assertion (i) (and thus $\Cd(\Omega)\le 1/2$)
can be regarded as a spherical analog of it.
In addition, Mejia and Minda \cite{MM90} showed that $C(\Omega)\ge1/2$
if and only if $\Omega$ is convex.
Let us mention the following result due to Minda.

\begin{theorem}[Minda $\text{\cite[Theorem 1]{Minda86}}$]
Let $\Omega$ be a spherically convex domain in $\sphere$ and $z\in\Omega.$
Then
$$
\mu_\Omega(z)\ge\frac{1+\varepsilon_\Omega(z)^2}{2\varepsilon_\Omega(z)},
$$
where equality holds if and only if $\Omega$ is a hemisphere.
\end{theorem}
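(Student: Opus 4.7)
The plan is to bound $\mu_\Omega$ below by the spherical density of a supporting hemisphere, and then to compute that density explicitly. First I would fix a point $a_0 \in \partial\Omega$ realizing $\tau(z,a_0) = \varepsilon_\Omega(z)$; such an $a_0$ exists by compactness of $\partial\Omega$. Since $\Omega$ is spherically convex, the spherical analogue of the supporting-hyperplane theorem yields an open hemisphere $H$ with $\Omega \subset H$ and $a_0 \in \partial H$. The Schwarz--Pick comparison principle applied to the inclusion $\Omega \hookrightarrow H$ gives $\lambda_\Omega \ge \lambda_H$, with equality at a single point only when $\Omega = H$; multiplying by $1+|z|^2$ yields $\mu_\Omega(z) \ge \mu_H(z)$.

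Next I would show $\varepsilon_H(z) = \varepsilon_\Omega(z)$: from $\sphere \setminus H \subset \sphere \setminus \Omega$ one obtains $\varepsilon_\Omega(z) \le \varepsilon_H(z)$, while $a_0 \in \partial H$ gives $\varepsilon_H(z) \le \tau(z,a_0) = \varepsilon_\Omega(z)$. To compute $\mu_H(z)$ as an explicit function of $\varepsilon_H(z)$, I would use the spherical-isometric invariance of $\mu$ and $\varepsilon$ to reduce to $H = \D$; an ordinary rotation (itself a spherical isometry) then brings $z$ to a point $s \in [0,1)$. A short computation gives
$$
\varepsilon_\D(s) = \frac{1-s}{1+s} \aand \mu_\D(s) = \frac{1+s^2}{1-s^2},
$$
and substituting $t = (1-s)/(1+s)$ rewrites the second as $\mu_\D(s) = (1+t^2)/(2t)$. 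Combining the steps gives
$$
\mu_\Omega(z) \ge \mu_H(z) = \frac{1+\varepsilon_H(z)^2}{2\varepsilon_H(z)} = \frac{1+\varepsilon_\Omega(z)^2}{2\varepsilon_\Omega(z)}.
$$

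Equality at $z$ forces $\lambda_\Omega(z) = \lambda_H(z)$, and by the strict form of Schwarz--Pick this forces $\Omega = H$, i.e.\ a hemisphere; the converse is immediate from the computation. The main obstacle I expect is the very first step, namely producing the \emph{open} supporting hemisphere through $a_0$. The cleanest route is to establish first that a hyperbolic spherically convex domain is contained in some open hemisphere, and then to pass to the gnomonic projection centered in that hemisphere, under which great circles become Euclidean lines and $\Omega$ becomes a Euclidean convex set; the classical supporting-hyperplane theorem in $\R^2$ then returns the required supporting great circle.
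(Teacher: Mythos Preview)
The paper does not prove this statement; it is quoted from Minda \cite{Minda86} and used only to deduce the consequence $\Ce(\Omega)\ge 1/2$ for spherically convex $\Omega$. There is therefore no proof in the paper to compare your attempt against.

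For what it is worth, your argument is correct and is essentially the standard one. The reduction to a supporting hemisphere together with domain monotonicity of the hyperbolic metric is exactly how such inequalities are obtained, and your computation for $H=\D$ is accurate: $\varepsilon_\D(s)=(1-s)/(1+s)$ and $\mu_\D(s)=(1+s^2)/(1-s^2)=(1+t^2)/(2t)$ with $t=\varepsilon_\D(s)$. The one step that genuinely requires justification, as you note, is the existence of an open supporting hemisphere through the nearest boundary point $a_0$; your proposed route---first observing that a hyperbolic spherically convex domain lies in some open hemisphere (it cannot contain an antipodal pair), then using the gnomonic projection to convert the problem into the Euclidean supporting-line theorem---is the clean way to do this.
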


In particular, $\varepsilon_\Omega(z)\mu_\Omega(z)\ge
(1+\varepsilon_\Omega(z)^2)/2>1/2$ and hence,
$$
\Ce(\Omega)\ge1/2
$$
for a spherically convex domain $\Omega.$
This gives a spherical analog to the one direction of the afore-mentioned
result.
We observe that $\Cd(\D_R)=\Ce(\D_R)=1/2$ for $0<R\le 1$
and $\Cd(\D_R)<\Ce(\D_R)<1/2$ for $R\ge1$ in Example \ref{ex:1}.
Since $\D_R$ is spherically convex if and only if $0<R\le 1,$
we have some hope that the conditions
$\Cd(\Omega)\ge1/2$ and/or $\Ce(\Omega)\ge1/2$
would characterize spherical convexity of $\Omega.$

\section{Spherical geometry}

In this section, we collect necessary information about the spherical geometry
to prove our main theorem.

Let $\mob$ be the group of M\"obius transformations $z\mapsto (az+b)/(cz+d),$
with $a,b,c,d\in\C,~ad-bc\ne0.$
This is nothing but the group of analytic automorphisms of the Riemann
sphere (the complex projective line) and is canonically isomorphic to
$\PSL(2,\C)=\SL(2,\C)/\{\pm I\}.$
Note that the action of $\mob$ on $\sphere$ is not isometric with
respect to the spherical metric $\sigma=|dz|/(1+|z|^2).$
We denote by $\isom^+(\sphere)$ the subgroup of $\mob$
consisting of spherical isometries.
It is a standard fact that each isometry $T\in\isom^+(\sphere)$ has either the
form
$$
T(z)=e^{i\theta}\frac{z-a}{1+\bar az}
$$
for a real constant $\theta$ and a complex number $a\in\C,$
or the form $T(z)=-e^{i\theta}/z$ for a real constant $\theta,$ in which case
we can interpret $a=\infty.$
In particular, we can see that $\isom^+(\sphere)$ acts on $\sphere$
transitively.
Note that $\tau(z,a)=|T(z)|$ for the above $T.$
It is also useful to note the relations
$$
\varepsilon_{T(\Omega)}(T(z))\mu_{T(\Omega)}(T(z))
=\varepsilon_{\Omega}(z)\mu_{\Omega}(z)
$$
and
$$
\delta_{T(\Omega)}(T(z))\mu_{T(\Omega)}(T(z))
=\delta_{\Omega}(z)\mu_{\Omega}(z),
$$
in particular,
$$
\Ce(T(\Omega))=\Ce(\Omega)
\aand
\Cd(T(\Omega))=\Cd(\Omega)
$$
for $T\in\isom^+(\sphere).$
Likewise, we also have $\Cdm(T(\Omega))=\Cdm(\Omega).$

Recall that $0\le\sigma(z,w)\le1$ and that $z$ and $w$ are called {\it antipodal}
if $\sigma(z,w)=1,$ which is equivalent to $\tau(z,w)=+\infty.$
It is easy to see that $z$ and $w$ are antipodal if and only if $z=-1/\bar w.$
We write $z^*=-1/\bar z$ for the aintipodal point of $z.$
It should be noted here that $\delta_\Omega(z)<1$ holds always for
a hyperbolic domain $\Omega.$

We have a simple relation between $\sigma$ and $\tau.$
Since
\begin{align*}
1+\tau(z,w)^2
&=\frac{|1+z\bar w|^2+|z-w|^2}{|1+z\bar w|^2} \\
&=\frac{(1+|z|^2)(1+|w|^2)}{|1+z\bar w|^2} \\
&=\frac{\tau(z,w)^2}{\sigma(z,w)^2},
\end{align*}
we have
$$
\sigma(z,w)=\frac{\tau(z,w)}{\sqrt{1+\tau(z,w)^2}}
\aand
\tau(z,w)=\frac{\sigma(z,w)}{\sqrt{1-\sigma(z,w)^2}}.
$$
In particular, $\sigma(z,w)\le\tau(z,w).$
We also have the relation $\delta_\Omega(z)
=\varepsilon_\Omega(z)/\sqrt{1+\varepsilon_\Omega(z)^2}$ for
a hyperbolic domain $\Omega.$

We now compare $\varepsilon_\Omega(z)$ with $d_\Omega(z).$

\begin{lemma}\label{lem:1}
Let $\Omega$ be a hyperbolic domain in $\C$ and fix a point $z\in\Omega.$
Then, $\varepsilon_\Omega(z)|z|\le 1$ and
$$
\frac{\varepsilon_\Omega(z)(1+|z|^2)}{1+\varepsilon_\Omega(z)|z|}
\le d_\Omega(z)\le
\frac{\varepsilon_\Omega(z)(1+|z|^2)}{1-\varepsilon_\Omega(z)|z|}.
$$
\end{lemma}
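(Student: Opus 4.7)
My plan is to conjugate by the M\"obius map $T_z(w)=(w-z)/(1+\bar z w)$, which sends $z$ to $0$ and $\infty$ to $1/\bar z$, and satisfies $|T_z(w)|=\tau(z,w)$. Inverting yields $T_z^{-1}(w)=(w+z)/(1-\bar z w)$, and a brief manipulation produces the key identity
$$
a-z=\frac{w\,(1+|z|^2)}{1-\bar z w},\qquad w=T_z(a),\ a\in\C.
$$
Combined with the standard two-sided estimate $1-|z||w|\le |1-\bar z w|\le 1+|z||w|$ (the left side being useful whenever it is nonnegative), this reduces the lemma to controlling $|w|=\tau(z,a)$ for $a\in\partial\Omega$.

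I would first prove $\varepsilon_\Omega(z)|z|\le 1$ by a connectedness argument, and I expect this to be the subtlest step, since it is the one place where the hypothesis $\Omega\subset\C$ is essential. Because $\Omega\subset\C$ gives $\infty\notin\Omega$, we have $1/\bar z=T_z(\infty)\notin T_z(\Omega)$, so the Euclidean segment in $\C$ from $0=T_z(z)$ to $1/\bar z$ begins inside the domain $T_z(\Omega)$ and exits it; it therefore meets $T_z(\partial\Omega)$ at some point $w_0$ with $|w_0|\le 1/|z|$, and pulling back yields $\varepsilon_\Omega(z)\le |w_0|\le 1/|z|$.

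Once this bound is in hand, the sandwich for $d_\Omega(z)$ follows routinely. For the lower estimate, plugging $|1-\bar z w|\le 1+|z||w|$ into the identity and noting that $r\mapsto r(1+|z|^2)/(1+|z|r)$ is strictly increasing on $[0,\infty)$ yields
$$
|a-z|\ge \frac{|w|(1+|z|^2)}{1+|z||w|}\ge \frac{\varepsilon_\Omega(z)(1+|z|^2)}{1+|z|\varepsilon_\Omega(z)}
$$
for every finite $a\in\partial\Omega$, whence the desired lower bound on $d_\Omega(z)$. For the upper estimate I would choose a finite $a^*\in\partial\Omega$ realizing $\tau(z,a^*)=\varepsilon_\Omega(z)$---possible whenever $\varepsilon_\Omega(z)|z|<1$ strictly, which is the only case in which the claimed upper bound is not $+\infty$---and apply $|1-\bar z T_z(a^*)|\ge 1-|z|\varepsilon_\Omega(z)>0$ in the identity to conclude.
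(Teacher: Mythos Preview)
Your proof is correct and follows essentially the same route as the paper: both conjugate by the spherical isometry $T_z(w)=(w-z)/(1+\bar z w)$ and extract the two-sided bound from the resulting Euclidean picture. The differences are purely organizational---the paper obtains $\varepsilon_\Omega(z)|z|\le 1$ more directly by observing that the $\tau$-ball $\Delta=\{w:\tau(w,z)<\varepsilon_\Omega(z)\}\subset\Omega\subset\C$ cannot contain $\infty$ (equivalently, $T_z^{-1}$ has no pole in $\D_{\varepsilon_\Omega(z)}$), and then gets the sandwich by computing the explicit Euclidean center and radius of $\Delta$ and using $r\pm|z-m|$, rather than your pointwise identity $a-z=w(1+|z|^2)/(1-\bar z w)$ together with the triangle-inequality bounds on $|1-\bar z w|$; both computations yield exactly the same expressions.
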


\begin{proof}
For brevity, set $\varepsilon=\varepsilon_\Omega(z)$ and
let $\Delta=\{w\in\sphere: \tau(w,z)<\varepsilon\}.$
Then, by assumption, $\Delta\subset\Omega\subset\C.$
Let $T(w)=(z-w)/(1+\bar zw).$
Note that $T\inv=T.$
Then $\Delta=T\inv(\D_\varepsilon)=T(\D_\varepsilon).$
Since $\Delta$ does not contain $\infty,$ the function $T$ does
not have a pole in $\D_\varepsilon,$ which implies $\varepsilon|z|\le1.$
If $\varepsilon|z|=1,$ $\Delta$ is a half-plane and $T$ has a pole at
$z^*.$
Note that the image of the diameter $[z^*, -z^*]$ of $\D_\varepsilon$
under $T$ is a half-line perpendicular to $\partial\Delta.$
The Euclidean distance from $z$ to $\partial\Delta$ is thus
$$
|T(-z^*)-T(0)|=\frac{|z-z^*|}2=\frac{1+|z|^2}{2|z|}
=\frac{\varepsilon(1+|z|^2)}{1+\varepsilon|z|}.
$$
The assertion is now confirmed in this case.
We next assume that $\varepsilon|z|<1.$
We then compute
$$
\frac{|1+\bar zw|^2(\tau(w,z)^2-\varepsilon^2)}{1-\varepsilon^2|z|^2}
=\left|w-\frac{(1+\varepsilon^2)}{1-\varepsilon^2|z|^2}z\right|^2
-\left(\frac{\varepsilon(1+|z|^2)}{1-\varepsilon^2|z|^2}\right)^2,
$$
which means that $\Delta$ is the disk with center 
$m=(1+\varepsilon^2)z/(1-\varepsilon^2|z|^2)$ and radius
$r=\varepsilon(1+|z|^2)/(1-\varepsilon^2|z|^2).$
Since a point $a$ in $\partial\Delta$ belongs to $\partial\Omega,$ we have
$$
d_\Omega(z)\le |z-a|\le r+|z-m|=\frac{\varepsilon(1+|z|^2)}{1-\varepsilon|z|}.
$$
On the other hand, we obtain
$$
d_\Omega(z)\ge d_\Delta(z)=r-|z-m|=
\frac{\varepsilon(1+|z|^2)}{1+\varepsilon|z|}.
$$
Thus the proof is complete.
\end{proof}

\section{Proof of the main theorem}

Before the proof of the main theorem, we prepare a couple of lemmas which
will be used later.
We will call a map $f:\Omega\to\C$ {\it disk-convex} if $f$ maps any
disk in $\Omega$ conformally onto a convex domain.
Note that any M\"obius transformation $T$ is disk-convex on $\Omega$ whenever
$T(\Omega)\subset\C.$

\begin{lemma}
Suppose that $f$ maps a hyperbolic domain $\Omega$ in $\C$ conformally onto
another hyperbolic domain $\Omega'$ in $\C.$
If $f$ is disk-convex, then for each $z\in\Omega,$
$$
d_\Omega(z)|f'(z)|\le 2d_{\Omega'}(f(z)).
$$
\end{lemma}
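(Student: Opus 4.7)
The plan is to use the largest Euclidean disk inscribed in $\Omega$ at $z$ together with the classical one-half covering theorem for conformal maps onto convex domains.

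First, set $r=d_\Omega(z)$ and let $D$ be the open disk of radius $r$ centered at $z$, which is by definition the largest Euclidean disk in $\Omega$ with center $z$. Since $f$ is disk-convex, the restriction $f|_D$ is a conformal map of $D$ onto a convex subdomain of $\Omega'$. Normalize by defining $g:\D\to f(D)$ via $g(\zeta)=f(z+r\zeta)$, so that $g(0)=f(z)$ and $g'(0)=rf'(z)=d_\Omega(z)f'(z)$.

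Next, invoke the classical covering theorem for convex conformal maps (Study's one-half theorem): if $g$ is a conformal map of $\D$ onto a convex domain, then $g(\D)\supset\{w:|w-g(0)|<|g'(0)|/2\}$. Applied here, this yields
$$
f(D)\supset\bigl\{w\in\C:|w-f(z)|<\tfrac12 d_\Omega(z)|f'(z)|\bigr\}.
$$
Since $f(D)\subset\Omega'$, the right-hand disk is contained in $\Omega'$, hence $d_{\Omega'}(f(z))\ge \tfrac12 d_\Omega(z)|f'(z)|$, which is exactly the claimed inequality.

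The two essential inputs are thus (a) the observation that the inscribed disk $D$ of radius $d_\Omega(z)$ is mapped by $f$ to a convex region inside $\Omega'$, which is immediate from the disk-convexity hypothesis, and (b) the one-half covering theorem, which is the only nontrivial step and really the content of the lemma. No further obstacle is expected; the constant $2$ in the statement matches exactly the constant $1/2$ in Study's theorem and is sharp (e.g., for a half-plane $\Omega'$ with $f$ a Möbius map).
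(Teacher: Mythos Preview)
Your proof is correct and follows essentially the same approach as the paper: take the inscribed disk $D$ of radius $d_\Omega(z)$ at $z$, use disk-convexity to ensure $f(D)$ is convex, and apply the one-half covering theorem for convex univalent maps (cited in the paper as \cite[Theorem~2.15]{Duren:univ}) to conclude $d_{\Omega'}(f(z))\ge d_{f(D)}(f(z))\ge \tfrac12 d_\Omega(z)|f'(z)|$. Your explicit normalization via $g(\zeta)=f(z+r\zeta)$ is a minor elaboration but not a different route.
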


\begin{proof}
Fix $z_0\in\Omega$ and set $d_0=d_\Omega(z_0).$
Since $f$ is convex on the disk $\Delta=\{z: |z-z_0|<d_0\},$
a covering theorem for convex functions (see \cite[Theorem 2.15]{Duren:univ})
implies that $f(\Delta)\supset\{w: |w-f(z_0)|<d_0|f'(z_0)|/2\}.$
Thus $d_{\Omega'}(f(z_0))\ge d_{f(\Delta)}(f(z_0))\ge d_0|f'(z_0)|/2.$
%The other inequality can also be proved by considering $f\inv.$
\end{proof}

Since $\lambda_{\Omega'}(f(z))|f'(z)|=\lambda_\Omega(z),$ we obtain the
following.

\begin{corollary}\label{cor:2}
For a hyperbolic domain $\Omega$ in $\C$ and a disk-convex univalent function 
$f:\Omega\to\C,$ 
$$
d_\Omega(z)\lambda_\Omega(z)\le
2d_{f(\Omega)}(f(z))\lambda_{f(\Omega)}(f(z)).
$$
In particular, $C(\Omega)\le 2C(f(\Omega)).$
\end{corollary}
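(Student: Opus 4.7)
The plan is to derive the pointwise inequality as a one-line combination of the previous lemma with the standard conformal invariance of the hyperbolic metric, and then pass to the infimum.

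First I would invoke the transformation rule for the hyperbolic metric under a conformal bijection $f:\Omega\to f(\Omega)$, namely $\lambda_{f(\Omega)}(f(z))|f'(z)|=\lambda_\Omega(z)$, which is the identity stated in the text just before the corollary. Using this to rewrite the left-hand side, I get
\[
d_\Omega(z)\lambda_\Omega(z)=\bigl(d_\Omega(z)|f'(z)|\bigr)\,\lambda_{f(\Omega)}(f(z)).
\]
Now I apply the preceding lemma, which gives $d_\Omega(z)|f'(z)|\le 2d_{f(\Omega)}(f(z))$ under the disk-convexity hypothesis on $f$, and conclude
\[
d_\Omega(z)\lambda_\Omega(z)\le 2\,d_{f(\Omega)}(f(z))\,\lambda_{f(\Omega)}(f(z)),
\]
which is the pointwise assertion.

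For the ``in particular'' part, I would take the infimum in $z\in\Omega$ on both sides. Since $f:\Omega\to f(\Omega)$ is a bijection, as $z$ varies over $\Omega$ the point $w=f(z)$ varies over all of $f(\Omega)$, so
\[
\inf_{z\in\Omega}d_{f(\Omega)}(f(z))\,\lambda_{f(\Omega)}(f(z))
=\inf_{w\in f(\Omega)}d_{f(\Omega)}(w)\,\lambda_{f(\Omega)}(w)=C(f(\Omega)),
\]
yielding $C(\Omega)\le 2C(f(\Omega))$. There is essentially no obstacle here: the content of the corollary is packed entirely into the preceding lemma (via the covering theorem for convex functions), and the corollary itself is just a mechanical repackaging in terms of the density $\lambda_\Omega$ and the quantity $C(\Omega)$. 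The only point worth flagging is that disk-convexity of $f$ is exactly the hypothesis required by the preceding lemma, so no further work is needed to justify its application.
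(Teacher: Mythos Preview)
Your proof is correct and matches the paper's approach exactly: the paper derives the corollary in a single line from the conformal invariance relation $\lambda_{f(\Omega)}(f(z))|f'(z)|=\lambda_\Omega(z)$ combined with the preceding lemma, just as you do. Your added justification for the ``in particular'' part (passing to the infimum via the bijection) is accurate and simply spells out what the paper leaves implicit.
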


\begin{remark}
In \cite{HM92}, Harmelin and Minda proved that $C(f(\Omega))\le A C(\Omega)$
for a conformal map $f$ with constant $A=\sqrt{1+3\coth^2(\pi/4)}=2.8241\dots$
and conjectured that $A$ can be reduced to $2.$
Later, Ma and Minda \cite{MM95} obtained a better bound:
$A=\sqrt{1+3\coth^2(\pi/3)}=2.4335\dots$
\end{remark}

\begin{proof}[Proof of the main theorem]
We first prove assertion (i).
The idea employed in the proof of Harmelin and Minda \cite[Theorem 4]{HM92}
works.
Fix a point $z_0\in\Omega$ and set $R=\varepsilon_\Omega(z_0).$
Take a boundary point $a\in\partial\Omega$ such that $R=\tau(z_0,a).$
By a suitable spherical isometry, we may assume that $z_0=0$ and $a>0$
(and hence, $a=R$).
Then, $\D_R\subset\Omega$ and thus $\mu_\Omega\le\mu_{\D_R}$ on $\D_R.$
Note also that $\varepsilon_\Omega(x)=\varepsilon_{\D_R}(x)=\sigma(x,R)$
for $0<x<R.$
Hence, by Example \ref{ex:1},
$$
\Ce(\Omega)\le
\lim_{x\to R^-}\varepsilon_\Omega(x)\mu_\Omega(x)
\le\lim_{x\to R^-}\varepsilon_{\D_R}(x)\mu_{\D_R}(x)\le\frac12.
$$

Assertion (ii) is obvious because $\delta_\Omega(z)\le\varepsilon_\Omega(z).$

We next show assertion (iii).
By definition and Lemma \ref{lem:1}, we observe
$$
d_\Omega(z)\lambda_\Omega(z)
\ge\frac{\varepsilon_\Omega(z)(1+|z|^2)}{1+\varepsilon_\Omega(z)|z|}
\cdot\frac{\mu_\Omega(z)}{1+|z|^2}
\ge\frac{\varepsilon_\Omega(z)\mu_\Omega(z)}2,
$$
from which the inequality $C(\Omega)\ge\Ce(\Omega)/2$ follows.

%We next show that $\delta_\Omega(z)\mu_\Omega(z)\ge (???)C(\Omega)$
%for $z\in\Omega.$
Finally, we show assertion (iv).
Fix a point $z\in\Omega$ and take a point $a\in\partial\Omega$
such that $\delta_\Omega(z)=\sigma(z,a).$
Then take a point $b\in\sphere\setminus\Omega$ so that
$\max_{w\in\sphere\setminus\Omega}\sigma(w,a)=\sigma(b,a).$
It is easy to see the inequality
$$
\frac12\sigma(\sphere\setminus\Omega)\le \sigma(a,b)\le 
\sigma(\sphere\setminus\Omega),
$$
where $\sigma(\sphere\setminus\Omega)$ is the spherical diameter of
$\sphere\setminus\Omega.$
Let $T\in\isom^+(\sphere)$ such that $T(b)=\infty.$
Then $a'=T(a)\ne\infty$ and $\sigma(a,b)=\sigma(a',\infty)=1/\sqrt{1+|a'|^2}.$
Set $\Omega'=T(\Omega)$ and $z'=T(z).$
Note here that $\delta_{\Omega'}(z')=\sigma(z',a').$
Then, by the above observations and Corollary \ref{cor:2}, we have
\begin{align*}
\delta_\Omega(z)\mu_\Omega(z)
&=\delta_{\Omega'}(z')\mu_{\Omega'}(z') \\
&=\frac{\sqrt{1+|z'|^2}}{\sqrt{1+|a'|^2}}|z'-a'|\lambda_{\Omega'}(z') \\
&\ge \frac{\sigma(\sphere\setminus\Omega)\sqrt{1+|z'|^2}}2d_{\Omega'}(z')
\lambda_{\Omega'}(z') \\
&\ge \frac{\sigma(\sphere\setminus\Omega)}2\cdot\frac{d_{\Omega}(z)
\lambda_{\Omega}(z)}2.
\end{align*}
Hence, we obtain the inequality $\Cdm(\Omega)\ge C(\Omega)/4.$
\end{proof}

\noindent
{\bf Acknowledgement.}
The author would like to express his sincere thanks to the referee for
careful reading and corrections.

%\bibliography{papers}
\end{document}